\DeclareMathOperator\erf{erf}
\theoremstyle{definition}
\theoremstyle{theorem}
\newtheorem{lemma}{Lemma}[section]
\theoremstyle{theorem}
\newtheorem{theorem}{Theorem}[section]
\theoremstyle{corollary}
\newtheorem{corollary}{Corollary}[section]
\theoremstyle{remark}
\newtheorem{remark}{Remark}
\title{Existence and uniqueness of the modified error function}
\author[1,2]{Andrea N. Ceretani\thanks{aceretani@austral.edu.ar}}
\author[3,4]{Natalia N. Salva\thanks{natalia.salva@yahoo.com.ar}}
\author[1]{Domingo A. Tarzia\thanks{dtarzia@austral.edu.ar}}
\affil[1]{{\small CONICET - Depto. Matem\'atica, Facultad de Ciencias Empresariales, Univ. Austral, Paraguay 1950, S2000FZF Rosario, Argentina.}}
\affil[2]{{\small Depto. de Matem\'atica, Facultad de Ciencias Exactas, Ingenier\'ia y Agrimensura, Univ. Nacional de Rosario, Pellegrini 250, S2000BTP Rosario, Argentina.}}
\affil[3]{CONICET - CNEA, Depto. de Mec\'anica Computacional, Centro At\'omico Bariloche, Av. Bustillo 9500, 8400 Bariloche, Argentina.}
\affil[4]{Depto. de Matem\'atica, Centro Regional Bariloche, Univ. Nacional del Comahue, Quintral 250, 8400 Bariloche, Argentina.}
\date{}
\begin{document}  
\maketitle

\begin{abstract}
This article is devoted to prove the existence and uniqueness of solution to the non-linear second order differential problem through which is defined the modified error function introduced in {\em Cho-Sunderland, J. Heat Transfer, 96-2:214-217, 1974}. We prove here that there exists a unique non-negative analytic solution for small positive values of the parameter on which the problem depends.
\end{abstract}

\noindent {\em Key words} Modified error function, error function, phase change problem, temperature-dependent thermal conductivity, nonlinear second order ordinary differential equation.

\noindent {\em 2000 MSC} 35R35, 80A22, 34B15, 34B08.

\section{Introduction}\label{Sec:Intro}
In 1974, Cho and Sunderland \cite{ChSu1974} studied a solidification process with temperature-dependent thermal conductivity and obtained an explicit similarity solution in terms of what they called a {\em modified error function}. This function is defined as the solution to the following non-linear differential problem:
\begin{subequations}\label{Pb:y}
\begin{align}
\label{eq:y}&[(1+\delta y(x))y'(x)]'+2xy'(x)=0\quad 0<x<+\infty\\
\label{cond:0}&y(0)=0\\
\label{cond:infty}&y(+\infty)=1
\end{align}
\end{subequations}
where $\delta\geq -1$ is given. Graphics for numerical solutions of (\ref{Pb:y}) for different values of $\delta$ can be found in \cite{ChSu1974}. The classical error function is defined by:
\begin{equation}\label{erf}
\erf(x)=\frac{2}{\sqrt{\pi}}\displaystyle\int_0^x\exp(-z^2)dz,\quad x>0,
\end{equation}
and it is a solution to (\ref{Pb:y}) when $\delta=0$. This makes meaningful the denomination {\em modified error function} given for the solution to problem (\ref{Pb:y}).

The modified error function has also appeared in the context of diffusion problems before 1974 \cite{Cr1956,Wa1950}. It was also used later in several opportunities to find similarity solutions to phase-change processes \cite{BrNaTa2007,FrVi1987,Lu1991,OlSu1987,SaTa2011-b,Ta1998}. It was cited in \cite{CoKa1994}, were several non-linear ordinary differential problems arise from a wide variety of fields are presented. Closed analytical solutions for Stefan problems with variable diffusivity is given in \cite{VoFa2013}. Temperature-dependent thermal coefficients are very important in thermal analysis, e.g. see \cite{So2016}. 
Nevertheless, to the knowledge of the authors, the existence and uniqueness of the solution to problem (\ref{Pb:y}) has not been yet proved. This article is devoted to prove it for small $\delta>0$ using a fixed point strategy.

\section{Existence and uniqueness of solution to problem (\ref{Pb:y})}\label{Sec:EyU}
The main idea developed in this Section is to study problem (\ref{Pb:y}) through the linear problem given by the differential equation:
\begin{equation}\label{eq:yLin}
[(1+\delta\Psi_h(x))y'(x)]'+2xy'(x)=0, \quad 0<x<+\infty, \tag{\ref{eq:y}$^\star$}
\end{equation}
and conditions (\ref{cond:0}), (\ref{cond:infty}). The function $\Psi_h$ in (\ref{eq:yLin}) is defined by:
\begin{equation}\label{Psi}
\Psi_h(x)=1+\delta h(x),\quad x>0,
\end{equation}
where $\delta>0$, $h\in K\subset X$ is given and:
\begin{subequations}\label{KX}
\begin{align}
\label{X}&X=\left\{h:\mathbb{R}_0^+\to\mathbb{R}\,/\,h\text{ is an analytic function},\,||h||_{\infty}<\infty\right\}\\
\label{K}&K=\left\{h\in X\,/\, ||h||_{\infty}\leq 1,\,0 \leq h,\,h(0)=0,\,h(+\infty)=1\right\}.
\end{align}
\end{subequations}
Hereinafter, we will refer to the problem given by (\ref{eq:yLin}), (\ref{cond:0}) and (\ref{cond:infty}) as problem (\ref{Pb:y}$^\star$). Let us observe that $K$ is non-empty closed subset of the Banach space $X$.

The advantage in considering the linear equation (\ref{eq:yLin}) is that it can be easily solved through the substitution $v=y'$. Thus, we have the following result:

\begin{theorem}\label{Th:caract1}
Let $h\in K$ and $\delta>0$. The solution $y$ to problem (\ref{Pb:y}$^\star$) is given by:
\begin{equation}\label{yh}
y(x)=C_h\displaystyle\int_0^x \frac{1}{\Psi_h(\eta)}\exp\left(-2\displaystyle\int_0^\eta\frac{\xi}{\Psi_h(\xi)}d\xi\right)d\eta\quad x\geq 0,
\end{equation}
where the constant $C_h$ is defined by:
\begin{equation}\label{Ch}
C_h=\left(\displaystyle\int_0^{+\infty} \frac{1}{\Psi_h(\eta)}\exp\left(-2\displaystyle\int_0^\eta\frac{\xi}{\Psi_h(\xi)}d\xi\right)d\eta\right)^{-1}.
\end{equation}
\end{theorem}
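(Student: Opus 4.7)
The plan is to treat (\ref{eq:yLin}) as a second-order \emph{linear} ODE in $y$, since $\Psi_h$ depends only on the fixed function $h\in K$, and reduce it to a first-order equation by the classical substitution $v=y'$. Two successive integrations, combined with the two boundary conditions, then deliver both formula (\ref{yh}) and the normalisation constant (\ref{Ch}).

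Concretely, I would first rewrite (\ref{eq:yLin}) as $[\Psi_h(x)v(x)]'+2xv(x)=0$ with $v=y'$, and separate variables:
\begin{equation*}
\frac{v'(x)}{v(x)}=-\frac{\Psi_h'(x)}{\Psi_h(x)}-\frac{2x}{\Psi_h(x)}.
\end{equation*}
This manipulation is legitimate because $h\in K$ gives $\Psi_h\geq 1>0$ and $\Psi_h$ analytic, so $v$ cannot vanish on $[0,+\infty)$ unless it vanishes identically, which is incompatible with $y(+\infty)=1-y(0)=1$. Integrating from $0$ to $x$ and using $h(0)=0$, hence $\Psi_h(0)=1$, yields
\begin{equation*}
v(x)=\frac{C_h}{\Psi_h(x)}\exp\!\left(-2\int_0^x\frac{\xi}{\Psi_h(\xi)}\,d\xi\right),
\end{equation*}
where $C_h=v(0)$ is a free constant. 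A second integration from $0$ to $x$ combined with $y(0)=0$ then produces (\ref{yh}) at once.

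The constant $C_h$ is pinned down by imposing $y(+\infty)=1$, which gives (\ref{Ch}) provided the improper integral there is finite and strictly positive. I would check this via the elementary bound coming from $h\in K$: since $0\leq h\leq 1$ one has $1\leq\Psi_h(\xi)\leq 1+\delta$ for every $\xi\geq 0$, and consequently $\int_0^\eta \xi/\Psi_h(\xi)\,d\xi\geq\eta^2/[2(1+\delta)]$. Hence the integrand in (\ref{Ch}) is dominated by $\exp\!\bigl(-\eta^2/(1+\delta)\bigr)$, making the integral convergent; positivity is obvious because the integrand is pointwise positive.

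I do not anticipate a real obstacle here: the argument is a textbook integration of a first-order linear ODE, and the only quantitative input is the uniform envelope $1\leq\Psi_h\leq 1+\delta$ inherited from membership in $K$, which supplies the Gaussian-type tail needed for the normalisation. Analyticity of the solution on $[0,+\infty)$ and the required regularity for (\ref{eq:yLin})--(\ref{cond:infty}) to hold classically follow directly from those of $\Psi_h$ together with standard properties of integrals of analytic functions.
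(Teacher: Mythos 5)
Your proposal is correct and takes essentially the same route as the paper: the paper likewise solves (\ref{eq:yLin}) through the substitution $v=y'$ and, in its written proof, uses the same envelope $1\leq\Psi_h\leq 1+\delta$ to show $C_h$ is well defined before verifying that (\ref{yh}) satisfies the problem. The only difference is cosmetic — you derive the formula by two integrations (which also gives uniqueness for the linear problem) while the paper phrases the final step as a direct verification — so no further comment is needed.
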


\begin{proof}
Let us first observe that the constant $C_h$ given by (\ref{Ch}) is well defined, that is, that $C_h\in\mathbb{R}$. In fact, we have:
\begin{equation}\label{Ch-1}
\begin{split}
|C_h^{-1}|&=\displaystyle\int_0^{+\infty}\frac{1}{\Psi_h(\eta)}\exp\left(-2\displaystyle\int_0^\eta\frac{\xi}{\Psi(\xi)}d\xi\right)d\eta\\
&\geq\frac{1}{1+\delta}\displaystyle\int_0^{+\infty}\exp(-\eta^2)d\eta=\frac{\sqrt{\pi}}{2(1+\delta)}
\end{split}
\end{equation}
Now the proof follows easily by checking that the function $y$ given by (\ref{yh}) satisfies problem (\ref{Pb:y}$^\star$).
\end{proof}

The following result is an immediate consequence of Theorem \ref{Th:caract1}.

\begin{corollary}\label{Co:caract2}
Let $y\in K$ and $\delta>0$. Then $y$ is a solution to problem (\ref{Pb:y}) if and
only if $y$ is a fixed point of the operator $\tau$ from $K$ to $X$ defined by:
\begin{equation}
\tau (h)(x)=C_h\displaystyle\int_0^x \frac{1}{\Psi_h(\eta)}\exp\left(-2\displaystyle\int_0^\eta\frac{\xi}{\Psi_h(\xi)}d\xi\right)d\eta\quad x>0,
\end{equation}
with $C_h$ given by (\ref{Ch}).
\end{corollary}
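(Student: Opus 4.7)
The plan is to observe that the corollary is essentially a tautology once we identify the nonlinear equation (\ref{eq:y}) with the linear equation (\ref{eq:yLin}) evaluated at the choice $h=y$. The key point is that when we set $h=y$ in the definition (\ref{Psi}) of $\Psi_h$, we get $\Psi_y(x)=1+\delta y(x)$, which is precisely the coefficient appearing inside the divergence term of (\ref{eq:y}). Therefore solving (\ref{Pb:y}) amounts to finding $y\in K$ that solves (\ref{Pb:y}$^\star$) with its own self as the parameter $h$.

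I would proceed as follows. First I would prove the forward implication: suppose $y\in K$ solves (\ref{Pb:y}). Since $y\in K$, Theorem \ref{Th:caract1} can be invoked with $h:=y$. With this choice, equation (\ref{eq:yLin}) coincides with (\ref{eq:y}), and the boundary conditions (\ref{cond:0})--(\ref{cond:infty}) are shared by both problems, so $y$ is a solution of (\ref{Pb:y}$^\star$) with parameter $h=y$. By Theorem \ref{Th:caract1}, this solution is given by the explicit formula (\ref{yh}), which, read with $h=y$, is exactly $\tau(y)(x)$. Hence $y=\tau(y)$.

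Next I would prove the converse: suppose $y\in K$ satisfies $y=\tau(y)$. By definition of $\tau$, this means $y$ coincides with the function (\ref{yh}) associated to $h=y$. Theorem \ref{Th:caract1} then ensures that $y$ is a solution of problem (\ref{Pb:y}$^\star$) with $h=y$. But with this choice of $h$, equation (\ref{eq:yLin}) is literally (\ref{eq:y}), and the boundary conditions are the same. Therefore $y$ solves (\ref{Pb:y}).

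There is no genuine obstacle here; the only subtle point worth stating explicitly is the hypothesis $y\in K$, which is what legitimises the substitution $h=y$ in Theorem \ref{Th:caract1} (in particular, this guarantees that $\Psi_y$ satisfies $1\leq \Psi_y\leq 1+\delta$, so that the integrals in (\ref{yh}) and (\ref{Ch}) converge exactly as in (\ref{Ch-1})). The actual work of the paper is not in this corollary but in the subsequent step of showing that $\tau$ maps $K$ into itself and is a contraction for small $\delta>0$, which will yield the existence and uniqueness of the fixed point via Banach's theorem.
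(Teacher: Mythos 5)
Your proof is correct and matches the paper's intent: the paper gives no separate argument, stating the corollary as an immediate consequence of Theorem \ref{Th:caract1}, and your argument is exactly that consequence spelled out, namely substituting $h=y$ so that (\ref{eq:yLin}) becomes (\ref{eq:y}) and reading the representation formula (\ref{yh}) as the fixed-point identity $y=\tau(y)$. Your remark that $y\in K$ is what legitimises the substitution (ensuring $1\leq\Psi_y\leq 1+\delta$ and convergence of the integrals) is a sensible, accurate addition.
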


\begin{remark}
Observe that $\tau(K)\subset K$.
\end{remark}

We will now focus on analyzing when $\tau$ has only one fixed point. The estimations summarized next will be useful in the following.

\begin{lemma}\label{Le:cotas}
Let $h,h_1,h_2\in K$, $\delta>0$ and $x\geq 0$. We have:
\begin{enumerate}
\item[]
\item[a)] $\displaystyle\bigintsss_0^x\left|
\frac{\exp\left(-2\displaystyle\int_0^\eta\frac{\xi}{\Psi_{h_1}(\xi)}d\xi\right)}{\Psi_{h_1}(\eta)}-
\frac{\exp\left(-2\displaystyle\int_0^\eta\frac{\xi}{\Psi_{h_2}(\xi)}d\xi\right)}{\Psi_{h_2}(\eta)}\right|d\eta$
\item[]
\item[]$\leq\frac{\sqrt{\pi}}{4}\delta\sqrt{1+\delta}(3+\delta)||h_1-h_2||_{\infty}$,
\item[]
\item[b)] $|C_{h_1}-C_{h_2}|\leq \frac{1}{\sqrt{\pi}}\delta\sqrt{1+\delta}(1+\delta)^2(3+\delta)||h_1-h_2||_{\infty}$,
\item[]
\item[c)] $\displaystyle\int_0^x\frac{1}{\Psi_h(\eta)}\exp\left(-2\displaystyle\int_0^\eta\frac{\xi}{\Psi_h(\xi)}d\xi\right)d\eta\leq\frac{\sqrt{\pi(1+\delta)}}{2}$.
\end{enumerate}
\end{lemma}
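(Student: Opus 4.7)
The plan is to prove (c) first, since its Gaussian estimate is the workhorse for the other two parts, and to obtain (b) as a corollary of (a). All three bounds rest on the two-sided inequality $1\le \Psi_h(\xi)\le 1+\delta$, which is immediate from $0\le h\le 1$ and $\delta>0$. This gives $2\int_0^\eta \xi/\Psi_h(\xi)\,d\xi \ge \eta^2/(1+\delta)$, so the integrand in (c) is dominated by $e^{-\eta^2/(1+\delta)}$. Extending the integral to $[0,+\infty)$ and changing variables $u=\eta/\sqrt{1+\delta}$ yields $\sqrt{\pi(1+\delta)}/2$, which is exactly the bound claimed in (c).

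For (a), I would denote
\[
F_i(\eta)=\frac{1}{\Psi_{h_i}(\eta)}\exp\!\left(-2\int_0^\eta \frac{\xi}{\Psi_{h_i}(\xi)}\,d\xi\right),\qquad i=1,2,
\]
and split $|F_1-F_2|$ by the triangle inequality into an algebraic piece, controlled by $\bigl|\Psi_{h_1}^{-1}-\Psi_{h_2}^{-1}\bigr|=\delta|h_1-h_2|/(\Psi_{h_1}\Psi_{h_2})\le \delta\|h_1-h_2\|_\infty$, and an exponential piece, controlled by the elementary bound $|e^{-a}-e^{-b}|\le e^{-\min(a,b)}|a-b|$. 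For the second piece I would note
\[
\left|2\!\int_0^\eta \!\frac{\xi}{\Psi_{h_1}}\,d\xi - 2\!\int_0^\eta \!\frac{\xi}{\Psi_{h_2}}\,d\xi\right|\le \delta\|h_1-h_2\|_\infty\,\eta^2,
\]
while $e^{-\min(a,b)}\le e^{-\eta^2/(1+\delta)}$ by the same computation used in (c). After substitution, both contributions reduce to Gaussian moment integrals, namely $\int_0^\infty e^{-\eta^2/(1+\delta)}d\eta=\sqrt{\pi(1+\delta)}/2$ and $\int_0^\infty \eta^2 e^{-\eta^2/(1+\delta)}d\eta=(1+\delta)^{3/2}\sqrt{\pi}/4$. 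Adding them and factoring $\tfrac{\sqrt{\pi}}{4}\delta\sqrt{1+\delta}$ gives the bracket $2+(1+\delta)=3+\delta$, matching the stated constant.

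Part (b) follows almost immediately by writing $C_{h_1}-C_{h_2}=C_{h_1}C_{h_2}(C_{h_2}^{-1}-C_{h_1}^{-1})$. The lower bound $|C_h^{-1}|\ge \sqrt{\pi}/(2(1+\delta))$ already established in (\ref{Ch-1}) gives $|C_h|\le 2(1+\delta)/\sqrt{\pi}$, hence $|C_{h_1}C_{h_2}|\le 4(1+\delta)^2/\pi$. The factor $|C_{h_2}^{-1}-C_{h_1}^{-1}|$ is bounded by (a) applied with $x\to +\infty$, and multiplying the two estimates produces exactly $\tfrac{1}{\sqrt{\pi}}\delta\sqrt{1+\delta}(1+\delta)^2(3+\delta)\|h_1-h_2\|_\infty$.

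The only delicate step is the bookkeeping of constants in (a): one must be careful to use the \emph{same} worst-case Gaussian $e^{-\eta^2/(1+\delta)}$ for both the algebraic and the exponential pieces, since any looser envelope would spoil the clean factorisation into $\sqrt{1+\delta}(3+\delta)$. Everything else is a routine application of the triangle inequality and the explicit Gaussian moments.
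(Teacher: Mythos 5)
Your proposal is correct and follows essentially the same route as the paper: split the integrand difference into the $\Psi_h^{-1}$ piece and the exponential piece, control both with $1\le\Psi_h\le 1+\delta$ and the worst-case Gaussian $e^{-\eta^2/(1+\delta)}$, evaluate the Gaussian moments to get the factor $\sqrt{1+\delta}(3+\delta)$, and deduce b) from a) together with the bound $|C_h|\le 2(1+\delta)/\sqrt{\pi}$ from (\ref{Ch-1}). The only (harmless) difference is that you use $|e^{-a}-e^{-b}|\le e^{-\min(a,b)}|a-b|$ directly, which avoids the paper's Mean Value Theorem step and its case distinction between $h_1\le h_2$ and the general case via $\min\{h_1,h_2\}$, $\max\{h_1,h_2\}$.
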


\begin{proof}
Let $f$ be the real function defined on $\mathbb{R}^+_0$ by $f(x)=\exp(-2x)$. If $h_1\leq h_2$, it follows from the Mean Value Theorem applied to function $f$ that:
\begin{equation}
\begin{split}
&\left|
\exp\left(-2\displaystyle\int_0^\eta\frac{\xi}{\Psi_{h_1}(\xi)}d\xi\right)-
\exp\left(-2\displaystyle\int_0^\eta\frac{\xi}{\Psi_{h_2}(\xi)}d\xi\right)\right|\\
&=2\exp\left(-2\displaystyle\int_0^\eta\frac{\xi}{\Psi_{h_3}(\xi)}d\xi\right)\left|\displaystyle\int_0^\eta\frac{\xi}{\Psi_{h_1}(\xi)}d\xi-\displaystyle\int_0^\eta\frac{\xi}{\Psi_{h_2}(\xi)}d\xi\right|\\
&\leq \delta||h_2-h_1||_{\infty}\eta^2\exp\left(\frac{-\eta^2}{1+\delta}\right),
\end{split}
\end{equation}
where $h_1\leq h_3\leq h_2$. Now a) follows from regular computations. When $h_1\not\leq h_2$, as the LHS in a) can be bounded for the same expression but applied to $h_m=\min\{h_1,h_2\}$ and $h_M=\max\{h_1,h_2\}$, the proof runs as before and it is completed having into consideration that $||h_1-h_2||_{\infty}=||h_M-h_m||_{\infty}$.

The proof of b) follows from a), and c) can be obtained from regular  computations.

%\begin{equation}
%\begin{split}
%&\left|
%\exp\left(-2\displaystyle\int_0^\eta\frac{\xi}{\Psi_{h_1}(\xi)}d\xi\right)-
%\exp\left(-2\displaystyle\int_0^\eta\frac{\xi}{\Psi_{h_2}(\xi)}d\xi\right)\right|\\
%&\leq \left|\exp\left(-2\displaystyle\int_0^\eta\frac{\xi}{\Psi_M(\xi)}d\xi\right)-
%\exp\left(-2\displaystyle\int_0^\eta\frac{\xi}{\Psi_m(\xi)}d\xi\right)\right|,
%\end{split}
%\end{equation}

\end{proof}

\begin{theorem}\label{Th:contrac}
Let $\delta_1>0$ be the only one positive solution to the equation:
\begin{equation}\label{eq:delta1}
\frac{x}{2}(1+x)^{3/2}(3+x)[1+(1+x)^{3/2}]=1.
\end{equation}
If $0<\delta<\delta_1$, then $\tau$ is a contraction.
\end{theorem}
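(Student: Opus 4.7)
The plan is to bound $\|\tau(h_1)-\tau(h_2)\|_{\infty}$ uniformly in $x\geq 0$ by an explicit function of $\delta$, using the three estimates of Lemma~\ref{Le:cotas} together with the bound $|C_h|\leq 2(1+\delta)/\sqrt{\pi}$ that follows directly from (\ref{Ch-1}). Writing $I_h(x)$ for the integral appearing in (\ref{yh}) so that $\tau(h)(x)=C_h I_h(x)$, the first step is the standard add-and-subtract decomposition
\begin{equation*}
\tau(h_1)(x)-\tau(h_2)(x)=C_{h_1}\bigl(I_{h_1}(x)-I_{h_2}(x)\bigr)+\bigl(C_{h_1}-C_{h_2}\bigr)I_{h_2}(x),
\end{equation*}
followed by the triangle inequality. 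This reduces the task to bounding each factor separately.

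Next, I would estimate $|C_{h_1}|$ through (\ref{Ch-1}), the difference $|I_{h_1}(x)-I_{h_2}(x)|$ through Lemma~\ref{Le:cotas}(a), the difference $|C_{h_1}-C_{h_2}|$ through Lemma~\ref{Le:cotas}(b), and $I_{h_2}(x)$ through Lemma~\ref{Le:cotas}(c). Multiplying pairs and summing, the $\sqrt{\pi}$ factors cancel cleanly; one obtains
\begin{equation*}
|\tau(h_1)(x)-\tau(h_2)(x)|\leq \tfrac{\delta}{2}(1+\delta)^{3/2}(3+\delta)\bigl[1+(1+\delta)^{3/2}\bigr]\,\|h_1-h_2\|_{\infty},
\end{equation*}
and the coefficient on the right is precisely the left-hand side of (\ref{eq:delta1}) evaluated at $x=\delta$. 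Since this bound is uniform in $x$, taking the supremum on the left preserves the inequality.

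Finally, I would observe that the function $g(x)=\tfrac{x}{2}(1+x)^{3/2}(3+x)[1+(1+x)^{3/2}]$ is a product of non-negative, non-decreasing factors on $[0,+\infty)$, strictly increasing because of the factor $x$, with $g(0)=0$ and $g(x)\to+\infty$ as $x\to+\infty$; hence (\ref{eq:delta1}) admits a unique positive root $\delta_1$, and for every $\delta\in(0,\delta_1)$ one has $g(\delta)<1$, delivering the contraction constant. The only delicate part is bookkeeping the exponents of $(1+\delta)$ when combining the four estimates; the key check is that the contributions of the two summands simplify to $\tfrac{\delta(3+\delta)(1+\delta)^{3/2}}{2}$ and $\tfrac{\delta(3+\delta)(1+\delta)^{3}}{2}$ respectively, so that a common factor of $(1+\delta)^{3/2}$ can be pulled out to expose the bracket $[1+(1+\delta)^{3/2}]$ appearing in (\ref{eq:delta1}). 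No conceptual obstacle remains once Lemma~\ref{Le:cotas} is available.
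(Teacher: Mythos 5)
Your proposal is correct and follows essentially the same route as the paper: the same add-and-subtract decomposition of $\tau(h_1)-\tau(h_2)$, the bound on $C_{h_1}$ from (\ref{Ch-1}), the three estimates of Lemma~\ref{Le:cotas}, and the monotonicity of $g$ to identify $\delta_1$ and conclude $g(\delta)<1$ for $0<\delta<\delta_1$. Your exponent bookkeeping, giving the two contributions $\tfrac{\delta}{2}(3+\delta)(1+\delta)^{3/2}$ and $\tfrac{\delta}{2}(3+\delta)(1+\delta)^{3}$, reproduces exactly the contraction constant $\gamma=g(\delta)$ of the paper.
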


\begin{proof}
Let $g$ be the real function defined by:
\begin{equation}\label{g}
g(x)=\frac{x}{2}(1+x)^{3/2}(3+x)[1+(1+x)^{3/2}]\quad x\geq 0.
\end{equation}
Since $g$ is an increasing function from 0 to $+\infty$, we have that equation (\ref{eq:delta1}) admits only one positive solution $\delta_1$.

Let be now $h_1,h_2\in K$ and $x\geq 0$. From Lemma \ref{Le:cotas}, (\ref{Ch-1}) and:
\begin{equation*}
\begin{split}
&|\tau (h_1)(x)-\tau (h_2)(x)|\\
&\leq C_{h_1}\displaystyle\bigintsss_0^x\left|
\frac{\exp\left(-2\displaystyle\int_0^\eta\frac{\xi}{\Psi_{h_1}(\xi)}d\xi\right)}{\Psi_{h_1}(\eta)}-
\frac{\exp\left(-2\displaystyle\int_0^\eta\frac{\xi}{\Psi_{h_2}(\xi)}d\xi\right)}{\Psi_{h_2}(\eta)}\right|d\eta\\
&+|C_{h_1}-C_{h_2}|\displaystyle\int_0^x\frac{1}{\Psi_{h_2}(\eta)}\exp\left(-2\displaystyle\int_0^\eta\frac{\xi}{\Psi_{h_2}(\xi)}d\xi\right)d\eta,
\end{split}
\end{equation*}
it follows that $||\tau (h_1)-\tau (h_2)||_{\infty}\leq \gamma||h_1-h_2||_{\infty}$, where $\gamma=g(\delta)$. Recalling that $g$ is an increasing function, it follows that $\tau$ is a contraction when $0<\delta<\delta_1$.
\end{proof}

\begin{remark}
From a numerical computation, it can be found that:
\begin{equation*}
0.203701<\delta_1<0.203702.
\end{equation*}
\end{remark}

We are now in the position to formulate our main result:

\begin{corollary}\label{Co:EyU}
Let $\delta_1$ be as in Theorem \ref{Th:contrac}. If $0<\delta<\delta_1$, then problem (\ref{Pb:y}) has a unique non-negative analytic solution.
\end{corollary}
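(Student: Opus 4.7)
The plan is a direct application of the Banach Fixed Point Theorem to the operator $\tau$ on $K$. All three required hypotheses are already in place in the preceding results: the set $K$ is a non-empty closed subset of the Banach space $(X,||\cdot||_{\infty})$, so $K$ inherits the structure of a complete metric space; the remark following Corollary \ref{Co:caract2} records that $\tau(K)\subset K$; and Theorem \ref{Th:contrac} shows that $\tau:K\to K$ is a contraction whenever $0<\delta<\delta_1$. Hence $\tau$ admits a unique fixed point $y^\star\in K$.

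By Corollary \ref{Co:caract2}, $y^\star$ is precisely the unique solution of problem (\ref{Pb:y}) lying in $K$. Since every element of $K$ is by construction non-negative and analytic on $\mathbb{R}_0^+$ and satisfies the boundary conditions (\ref{cond:0}) and (\ref{cond:infty}), this yields the existence of a non-negative analytic solution of problem (\ref{Pb:y}).

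The only subtlety is in upgrading the uniqueness statement from ``uniqueness within $K$'' to ``uniqueness within the class of non-negative analytic solutions.'' For this one must verify that an arbitrary non-negative analytic solution $y$ of (\ref{Pb:y}) automatically belongs to $K$: non-negativity, analyticity, and the boundary values $y(0)=0$, $y(+\infty)=1$ are given, so the only remaining defining feature is the a priori bound $||y||_{\infty}\leq 1$. Establishing this sup-norm bound is the step I expect to require the most care, since it does not follow from the fixed-point framework itself; it calls for a direct analysis of (\ref{eq:y}), for instance by ruling out an interior value of $y$ exceeding $1$ through examination of the ODE at a critical point or by manipulating its first integral $(1+\delta y(x))y'(x)=y'(0)-2\int_0^x s\,y'(s)\,ds$ together with the prescribed limit at infinity.
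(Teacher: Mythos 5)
Your proposal follows exactly the paper's proof, which is a one-line invocation of Corollary \ref{Co:caract2}, Theorem \ref{Th:contrac} and the Banach Fixed Point Theorem applied to $\tau$ on the closed set $K$. The subtlety you flag --- that this argument strictly gives uniqueness only within $K$, i.e.\ among non-negative analytic solutions that in addition satisfy $||y||_{\infty}\leq 1$, so that full uniqueness in the stated class would require an a priori bound showing any non-negative analytic solution of (\ref{Pb:y}) stays below $1$ --- is genuine, but the paper leaves it implicit as well, so your version is, if anything, more careful than the published proof.
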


\begin{proof}
It is a direct consequence of Corollary \ref{Co:caract2}, Theorem \ref{Th:contrac} and the Banach Fixed Point Theorem.
\end{proof}

\subsection*{Acknowledgments}
This paper has been partially sponsored by the Project PIP No. 0534 from CONICET-UA (Rosario, Argentina) and AFOSR-SOARD Grant FA 9550-14-1-0122.

\bibliographystyle{plain}
\bibliography{References}

\begin{thebibliography}{10}

\bibitem{BrNaTa2007}
A.~C. Briozzo, M.~F. Natale, and D.~A. Tarzia.
\newblock Existence of an exact solution for a one-phase {S}tefan problem with
  nonlinear thermal coefficients from {T}irskii's method.
\newblock {\em Nonlinear Analysis}, 67:1989--1998, 2007.

\bibitem{ChSu1974}
S.~H. Cho and J.~E. Sunderland.
\newblock Phase-change problems with temperature-dependent thermal
  conductivity.
\newblock {\em J. Heat Transfer}, 96-2:214--217, 1974.

\bibitem{CoKa1994}
M.~Countryman and R.~Kannan.
\newblock Nonlinear boundary value problem on semi-infinite intervals.
\newblock {\em Computational and Applied Mathematics with Applications},
  3:59--75, 1994.

\bibitem{Cr1956}
J.~Crank.
\newblock {\em The mathematics of diffusion}.
\newblock Clarendon Press, Oxford, 1956.

\bibitem{FrVi1987}
J.~I. Frankel and B~Vick.
\newblock An exact methodology for solving nonlinear diffusion equations based
  on integral transforms.
\newblock {\em Applied Numerical Mathematics}, 3:467--477, 1987.

\bibitem{Lu1991}
V.~J. Lunardini.
\newblock {\em Heat transfer with freezing and thawing}.
\newblock Elsevier Science Publishers B. V., 1991.

\bibitem{OlSu1987}
D.~L.~R. Oliver and J.~E. Sunderland.
\newblock A phase-change problem with temperature-dependent thermal
  conductivity and specific heat.
\newblock {\em International Journal of Heat and Mass Transfer}, 30:2657--2661,
  1987.

\bibitem{SaTa2011-b}
N.~N. Salva and D.~A. Tarzia.
\newblock A sensitivity analysis for the determnation of unknown thermal
  coefficients through a phase-change process with temperature-dependent
  thermal conductivity.
\newblock {\em International Communications in Heat and Mass Transfer},
  38:418--424, 2011.

\bibitem{So2016}
M.~G. Sobamowo.
\newblock Thermal analysis of longitudinal fin with temperature-dependent
  properties and internal heat generation using {G}alerkin's method of weighted
  residual.
\newblock {\em Applied Thermal Engineering}, 99:1316--1330, 2016.

\bibitem{Ta1998}
D.~A. Tarzia.
\newblock The determination of unknown thermal coefficients through
  phase-change process with temperature-dependent thermal conductivity.
\newblock {\em International Communications in Heat and Mass Transfer},
  25:139--147, 1998.

\bibitem{VoFa2013}
V.~R. Voller and F.~Falcini.
\newblock Two exact solutions of a {S}tefan problem with varying diffusivity.
\newblock {\em International Journal of Heat and Mass Transfer}, 58:80--85,
  2013.

\bibitem{Wa1950}
C.~Wagner.
\newblock Diffusion of lead choride dissolved in solid silver chloride.
\newblock {\em The Journal of Chemical Physics}, 18:1227--1230, 1950.

\end{thebibliography}

\end{document}